\renewcommand{\div}{\mathrm{div}}
\renewcommand{\skew}{\mathrm{skew}}
\newtheorem{theorem}{Theorem}
\newtheorem{prop}[theorem]{Proposition}
\newtheorem{remark}[theorem]{Remark}
\title{Computation of stresses in jammed packings modeled with Tresca friction}
\author{ \begin{minipage}{\textwidth}\centering
		Fr\'ed\'eric Marazzato$^1$ and Shankar Venkataramani$^2$ \\
		\small{$^1$Department of Mathematical Sciences, University of Nevada Las Vegas, Las Vegas, NV 89154-4020, USA}\\
		\small{$^2$Department of Mathematics, The University of Arizona, Tucson, AZ 85721-0089, USA}\\
   \small{email: \texttt{frederic.marazzato@unlv.edu}, \texttt{shankar@arizona.edu}\\}
   \end{minipage}}
\date{}
\begin{document}
\hypersetup{urlcolor=blue,linkcolor=red,citecolor=blue}

\maketitle

\begin{abstract}
This paper is interested in the computation of stresses within jammed packings of rigid polygonal cells.
The cells are considered to follow a Tresca friction law.
First, a constrained minimization problem is introduced where the friction energy is minimized while enforcing the non-interpenetration of neighboring cells as inequality constraints.
The corresponding dual maximization problem is then deduced and its solutions provide normal stresses at the interface between cells.
Finally, lowest order Raviart--Thomas finite elements are used to reconstruct a consistent stress field by solving local problems.
Numerical results are presented to showcase the consistency and robustness of the proposed methodology.
\end{abstract}

\section{Introduction}
Jammed particulate systems, such as granular materials, or masonry structures, exhibit complex mechanical behaviors arising from a disordered microstructure and a network of inter-particle contacts.
When subjected to external loads, these systems support forces through a self-organized arrangement of contact stresses that ensure mechanical equilibrium.
Understanding and accurately computing the stress fields within such jammed configurations is essential for predicting macroscopic mechanical response and failure mechanisms.
Typical methods to compute stresses within granular material include Discrete Element Methods (DEM) as in \cite{cundall1979discrete,PhysRevE.72.041307}.

Granular materials inherently contain a lot of voids, which is an extra difficulty on its own.
This paper is concerned with dense, jammed packings of polygonal cells that do not have such voids e.g. brick walls or vaults, see Figure \ref{fig:examples}.
\begin{figure}[!htp]
\centering
\begin{subfigure}[b]{0.45\textwidth}
        \centering
        \includegraphics[scale=.15]{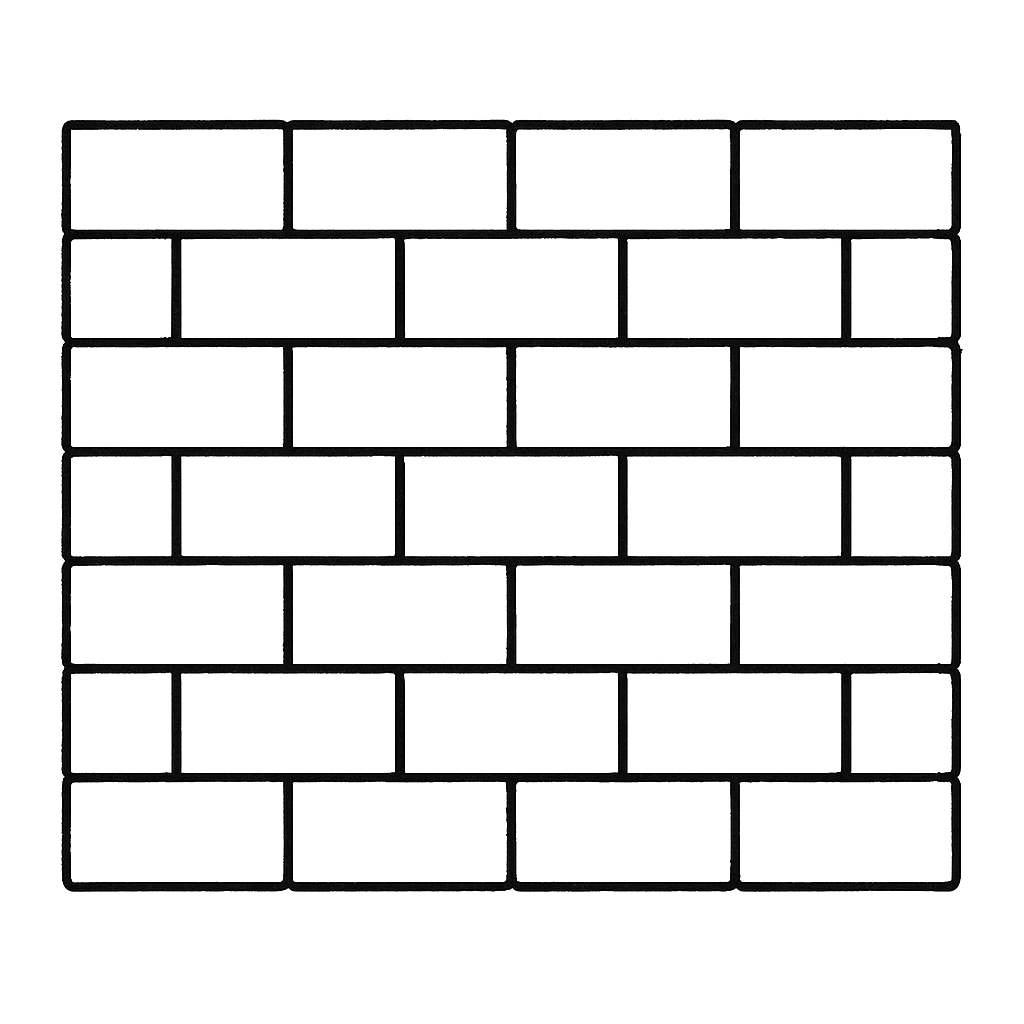}
        \caption{Brick wall}
    \end{subfigure}
    \hfill
    \begin{subfigure}[b]{0.45\textwidth}
        \centering
        \includegraphics[scale=.1]{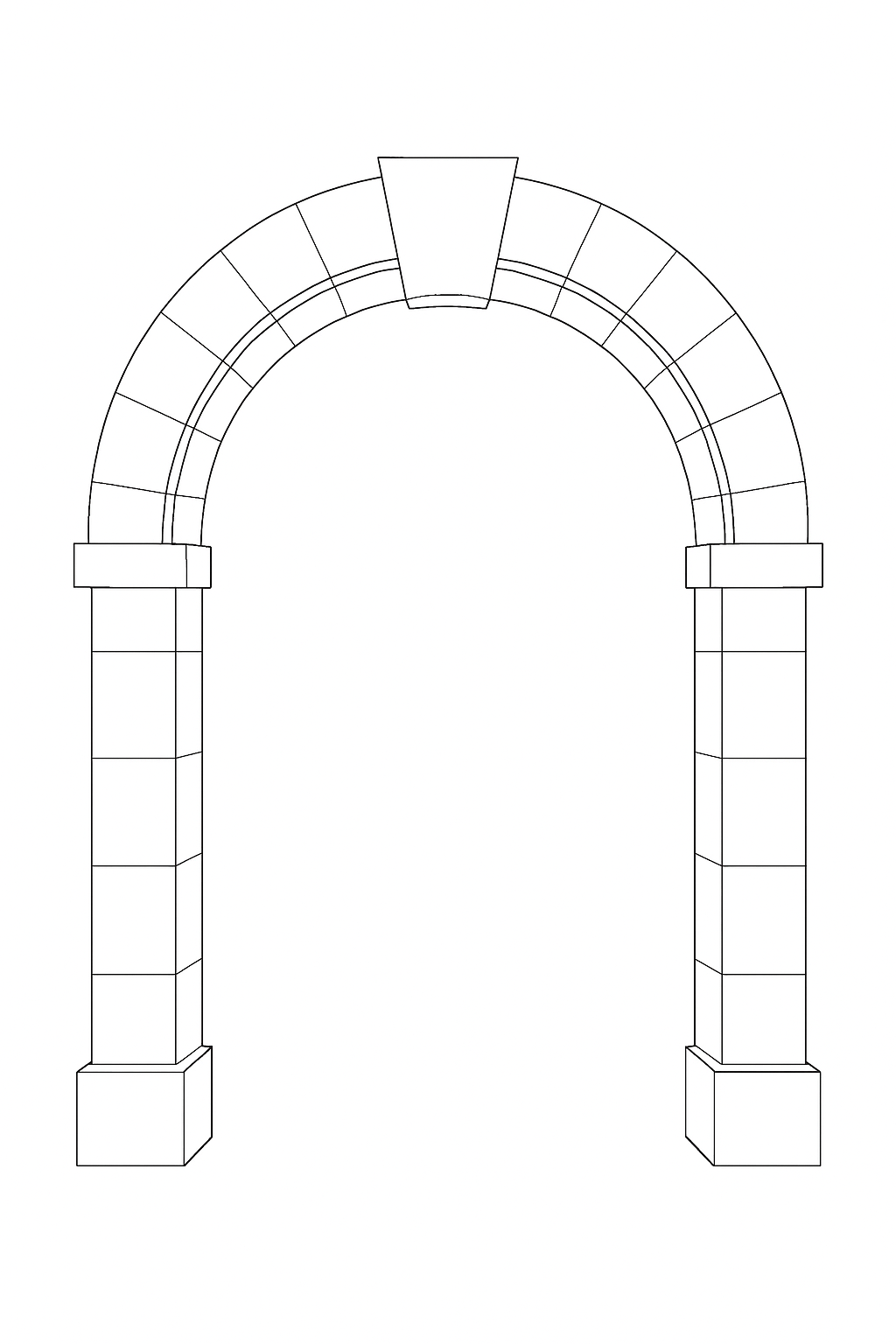}
        \caption{Vault with a keystone}
    \end{subfigure}
    \caption{Examples of jammed packings}
\label{fig:examples}
\end{figure}
Let us denote by $\Omega \subset \mathbb R^d$, where $d=2,3$ a domain of interest (the jammed packing).
We will consider that a packing is defined by a mesh of convex polygonal cells, denoted by $\mathcal M$, see Figure \ref{fig:voronoi}.
\begin{figure}[htp!]
\centering
\includegraphics[scale=.3]{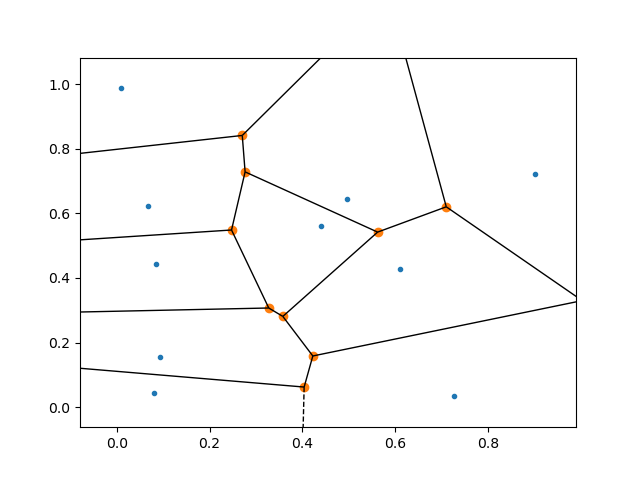}
\caption{Polygonal mesh $\mathcal M$ of $\Omega$.}
\label{fig:voronoi}
\end{figure}
Throughout this paper, we will refer to each element of the packing as a cell (of the mesh $\mathcal M$).
In Figure  \ref{fig:voronoi}, the blue dots represent the appropriate center for the cell (centroid for generic convex cells, Voronoi center for Voronoi cells, etc.).
The interfaces between cells will then be the internal edges of the mesh $\mathcal M$.

For certain engineering applications (civil engineering in the case of Figure \ref{fig:examples}), the range of external loads that jammed packings are subjected to does not lead to a significant elastic deformation of the cells.
Therefore, one can consider that the cells are fully rigid (i.e. they do not deform elastically).
When the packing is submitted to external loads, even though it might not deform elastically, internal stresses are still present within $\Omega$.
Under certain conditions, masonry structures \cite{roca2010structural} fall into that category.
The stresses within such jammed packings can then be determined exclusively by friction forces.
As \cite{tralli2014computational} puts it, masonry structures are not accurately modeled by elasticity.
We then decide to model the physics of jammed packings by the rigid friction of neighboring cells with a Tresca friction criterion.
Plasticity in the joint and other physical phenomena \cite{roca2010structural} should be taken into account to produce a more physically realistic model.
This work proposes a simplified model that allows to deduce stresses in jammed packings only from friction, without elastic effects.

Since \cite{cundall1979discrete}, many DEM have computed friction forces by penalizing interpenetration.
The friction forces are then deduced linearly from the interpenetration.
Plasticity can also be taken into account in \cite{cundall1979discrete}.
More recently, \cite{PORTIOLI2017485} has proposed an approach, using DEM, that does not allow interpenetration of neighboring cells and does not use elastic parameters.
Within appropriate loading regimes, we believe this approach better captures the underlying physics of jammed packings.
The main difference with our work is that \cite{PORTIOLI2017485} models large displacements whereas we are interested in the stresses within jammed packings.

This paper will first give a short reminder on Tresca friction.
It will then present the reconstructions used to obtain a primal energy based on the Tresca criterion.
The existence of minimizers of the primal energy is shown under appropriate hypotheses.
Assuming the jammed packing is stable, the displacement of each cell should be zero (up to machine error).
Thus, only solving the primal problem is not useful for the task at hand.
The dual program is then introduced and classical results relating the primal and dual problem are recalled.
The output of solving the dual problem is a collection of normal stresses at the edges of the mesh $\mathcal M$.
Using this data, a consistent reconstruction of the stresses, is then presented.
The reconstruction takes advantage of the fact that the lowest-order Raviart--Thomas elements $\mathbb{RT}_0$ are $H(\div)$ conforming and that the associated degrees of freedom (dofs) are the normal stresses at the barycenter of the edges of the mesh.
The idea is then to reconstruct in each cell $c \in \mathcal M$ a stress $\sigma_c \in \mathbb R^{d \times d}_\text{sym}$, the space of symmetric square tensors of size $d$, such that $\mathrm{div}(\sigma_c) = 0$ (in the absence of volume loads).
Finally, numerical results are presented to show the consistency of the proposed methodology as well as its efficiency in computing the stresses for domains with numerous cells.

For simplicity, we consider in the rest of this paper that $d=2$.
Extending the following methodology to $d=3$ is straightforward.

\subsection{Tresca Friction}

Let us illustrate the Tresca friction law in a very simple case.
One can refer to \cite{chouly2023finite} for further detail.
Let $\Omega \subset \mathbb R^2$ be an open bounded domain.
We assume that the boundary of $\Omega$, written $\partial \Omega$, is Lipschitz and write as $\Gamma_c \subset \partial \Omega$ the contact surface.
We write as $\bm n$ the outward unit normal to $\partial \Omega$ and as $\bm t = R(\frac\pi2) \bm n$ the unit tangent to $\partial \Omega$, where $R(\frac\pi2)$ is the 2d rotation matrix by an angle $\frac\pi2$.
We write as $\bm u:\Omega \to \mathbb R^2$ the displacement field and $\sigma: \Omega \to \mathbb R^{2 \times 2}_\text{sym}$ the stress tensor.
We consider that the material in $\Omega$ is potentially in contact at $\Gamma_c$ with a fully rigid material (it does not deform).
See figure \ref{fig:sketch friction} for a sketch of the setup.
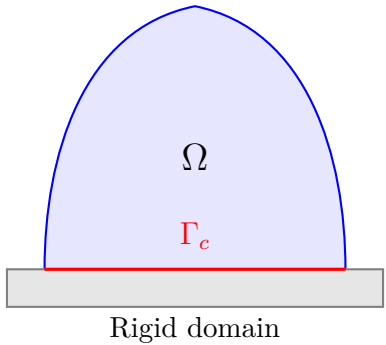
\begin{figure}[!htp]
\centering
\begin{tikzpicture}

\filldraw[fill=blue!10, draw=blue, thick]
    (0,0)
    .. controls (0,1.5) and (0.5,3.2) .. (2,3.5)
    .. controls (3.5,3.2) and (4,1.5) .. (4,0)
    .. controls (4,-0.1) and (0,-0.1) .. (0,0);  

\node at (2,1.5) {\Large $\Omega$};

\draw[gray, thick, fill=gray!20] (-0.5,-0.5) -- (4.5,-0.5) -- (4.5,0) -- (-0.5,0) -- cycle;
\node[below] at (2,-0.5) {Rigid domain};

\draw[red, very thick] 
    (0,0)
    .. controls (4/3,0) and (8/3,0) .. (4,0);

\draw[red] (0,0) -- (4,0) node[midway,above=4pt,red] {\large $\Gamma_c$}; 
\end{tikzpicture}
\caption{Sketch of the contact between a rigid domain and $\Omega$.}
\label{fig:sketch friction}
\end{figure}

Let us decompose the displacement and normal stress on $\Gamma_c$ as $\mathbf u = u_{\mathbf n} \mathbf n + \mathbf u_{\mathbf t}$ and $\sigma \mathbf n = \sigma_{\mathbf n} \mathbf n + \bm \sigma_{\mathbf t}$.
The kinematic contact conditions at $\Gamma_c$ are:
\begin{equation}
\label{eq:contact conditions}
\left\{
\begin{aligned}
&u_{\bm n} \le 0, \\
&\sigma_{\bm n} \le 0, \\
& \sigma_{\bm n} u_{\bm n} = 0. \\
\end{aligned}
\right.
\end{equation}
The first equation in \eqref{eq:contact conditions} prevents the penetration of $\Omega$ into the rigid domain, whereas the second describes that contact forces can only be repulsive.
The third equation describes that two situations are possible at any point on $\bm x \in \Gamma_c$:
\begin{itemize}
\item if the domain $\Omega$ is detached from the rigid support $u_{\bm n}(\bm x) < 0$, then necessarily $\sigma_{\bm n} = 0$
(there cannot be a contact force),
\item if there is a negative contact force $\sigma_{\bm n}< 0$, then necessarily the elastic body must stick to the support $u_{\bm n} = 0$.
\end{itemize}
Note that \eqref{eq:contact conditions} are sometimes called KKT conditions in the context of constrained optimization and in that context the normal stress $\sigma_{\bm n}$ is the Lagrange multiplier of the contact constraint.
The equations in \eqref{eq:contact conditions} are common to all contact laws and not specific to the Tresca friction law.
The Tresca friction law writes on $\Gamma_c$ as
\begin{equation}
\label{eq:tresca criterion}
\left\{ \begin{aligned}
& |\bm \sigma_{\mathbf t}| \le s_T &\text{if } \mathbf u_{\bm t} = \bm 0, \\
& \bm \sigma_{\bm t} = -s_T \frac{\bm u_{\bm t}}{|\bm u_{\bm t}| } &\text{otherwise},
\end{aligned} \right.
\end{equation}
where $s_T > 0$ is the Tresca friction coefficient.
The energy dissipation due to the Tresca friction writes as 
\begin{equation}
\label{eq:energy tresca}
\int_{\Gamma_c} s_T|\bm u_{\bm t}|.
\end{equation}

\section{Discrete Model}
\label{sec:model}
We consider an open bounded polygonal domain $\Omega \subset \mathbb{R}^2$.
For simplicity, the domain will be subjected to normal stress (or Neumann) boundary conditions but no volume loads.
Including volume loads would be a straightforward extension.

\subsection{Setup}
Let $\mathcal M$ be a decomposition of a domain $\Omega$ into a collection of convex polygonal cells.
Let us write as $\mathcal{E}$ the set of edges of the mesh.
The edges are partitioned as $\mathcal{E} = \mathcal{E}^\partial \cup \mathcal E^i$, where $\mathcal E^\partial$ are the external edges and $\mathcal E^i$ are the internal edges.
For a cell $c \in \mathcal M$, we denote the position of its center by $\bm x_c$, which also represents its initial position.
Each cell $c \in \mathcal M$ is composed of a collection of edges written $\mathcal{E}_c$.
The cells are considered to be rigid, and thus we take the current position $\bm z \in \mathbb{P}^0(\mathcal M)^2 =: V$ as piecewise constant in each cell.
The initial position of the cells is also represented by a piecewise constant functions $\bm x \in V$.
The displacement $\bm u_c$ of a cell $c \in \mathcal M$ is defined as $\bm u_c := \bm z_c - \bm x_c$.
The collection of cell displacements is then stored in $\bm u \in V$.
Since the cells are considered as rigid bodies, a complete description of the kinematics of a cell $c \in \mathcal M$ would require a translation of a distinguished point $\bm u_c$, and a rotation $\theta_c$, about that point.

However, in our present modeling, we choose to neglect the rotational degrees of freedom.
This simplification is motivated physically by the fact that the rotation of cells in a jammed configuration requires significant dilatancy and, as a consequence, the resistance of compressed packings to rotation is often orders of magnitude higher than the resistance to sliding (friction). This motivates our modeling approach where we solve only for force balance (dual to the translation degrees of freedom) and do not enforce the moment balance equations (which are the dual to the rotational degrees of freedom).

\begin{remark}
\label{rem:general mesh}
In the context of frictionless packings, Voronoi cells are typically used because they automatically satisfy the moment equations. Without friction, the interaction forces are purely normal to the edges. Since the normal bisectors of a Voronoi cell all intersect at the center the net moment is always zero.
For frictional packings, however, there is no particular mechanical advantage to using Voronoi cells over general polygonal cells, since tangential forces are present and moment balance is not automatically satisfied.
We focus on convex cells here because they are common in practice and offer flexibility in choosing the cell center $\bm x_c$.
\end{remark}

\subsection{Hypertstatic packings}
\label{sec:static_indet}
One of the key features of jammed packings is their static indeterminacy (or hyperstaticity).
Let us count the degrees of freedom and constraints for a generic planar packing.
The number of force balance equations depends on whether we enforce torque balance.
Let $n$ be the number of force variables per edge ($n=1$ for frictionless, $n=2$ for frictional).
Let $m$ be the number of moment balance equations per cell ($m=0$ if ignored/automatic, $m=1$ if enforced).
For a system with $F$ cells and $J$ internal edges, the number of degrees of freedom is $nJ$ and the number of constraints is $(2+m)F - 3$ (subtracting 3 global rigid body modes).
We summarize the different modeling choices in Table \ref{tab:counting}.

\begin{table}[h]
\centering
\begin{tabular}{|l|c|c|c|c|}
\hline
\textbf{Model} & $\bm n$ (vars/edge) & $\bm m$ (moments/cell) & \textbf{DoF} & \textbf{Constraints} \\
\hline
Frictionless Voronoi & 1 & 0 (automatic) & $J$ & $2F - 3$ \\
Frictional (Full Equilibrium) & 2 & 1 & $2J$ & $3F - 3$ \\
Frictional (Simplified, this paper) & 2 & 0 & $2J$ & $2F - 3$ \\
\hline
\end{tabular}
\caption{Counting argument for different granular models. $n$: force variables per edge. $m$: moment equations per cell. Our model ($n=2, m=0$) is highly hyperstatic.}
\label{tab:counting}
\end{table}

The condition for isostaticity would be DoF = Constraints.
In our simplified frictional model ($n=2, m=0$), the number of variables $2J$ far exceeds the constraints $2F-3$, leading to a large space of self-equilibrated stresses, corresponding to  the dimension of the null space of the equilibrium matrix, $s = nJ - (2F-3) > 0$.
This means there exists a nontrivial subspace consisting of `states of self-stress' that satisfy force balance with zero external load.
Our optimization approach \eqref{eq:primal problem}, and its numerical implementation effectively selects one specific solution from this hyperstatic manifold, and this state is one among the minimizers of the energy functional \eqref{eq:primal}.

\subsection{Facet values}
In order to write the primal energy that we shall minimize, we need facet values computed from cell values.
For a boundary edge $e \in \mathcal E^\partial$, we write the facet reconstruction operator as $\bm R_e(\bm u)$.
Since $e \in \mathcal E^\partial$, there exists a unique $c \in \mathcal M$ such that $e \subset \partial c$.
The idea of the reconstruction is to simply use the displacement of the cell $c$ that contains the boundary  facet $e$.
Since the packing is jammed, cells cannot spin.
Also, we consider the cells to be fully rigid, which makes this reconstruction consistent.
One thus has
\begin{equation}\label{eq:barycentric reconstruction}
  \bm R_e(\bm u) := \bm u_c.
\end{equation}

For $e \in \mathcal{E}^i$, one defines $\bm n_e$ as the unit normal to $e$ pointing from $c_-$ towards $c_+$, see Figure \ref{fig:sketch}.
The jump operator is defined for $\bm u$ at $e$ by
\[ [\bm u]_e :=
\bm u_{c_+} - \bm u_{c_-}. \]

\begin{figure} [!htp] 
\begin{center}
\begin{tikzpicture}[scale=1.3]
\coordinate (a) at (0,-1);
\coordinate (f) at (0.5,-2.5);
\coordinate (g) at (1.7,0);
\coordinate (i) at (2.2,-1.5);
\coordinate (j) at (3.5,-1.5);
\coordinate (k) at (2.2,-3.5);
\coordinate (l) at (0.8,-4);
\coordinate (p) at (barycentric cs:a=0.25,g=0.25,i=0.25,f=0.25);
\coordinate (q) at (barycentric cs:i=0.2,j=0.2,k=0.2,l=0.2,f=0.2);

\path[draw] (f) -- (a) -- (g) -- (i);
\path[draw] (i) -- (j) -- (k) -- (l) -- (f);

\fill[red] (barycentric cs:a=0.25,g=0.25,i=0.25,f=0.25) circle (2pt);
\draw[left] node at (barycentric cs:a=0.25,g=0.25,i=0.25,f=0.25) {$\bm x_{c_+}$};
\fill[red] (barycentric cs:i=0.2,j=0.2,k=0.2,l=0.2,f=0.2) circle
(2pt);
\draw[right] node at (barycentric cs:i=0.2,j=0.2,k=0.2,l=0.2,f=0.2) {$\bm x_{c_-}$};

\draw[->] (barycentric cs:p=0.5,q=0.5) -- (barycentric cs:p=0.7,q=0.3);
\draw[left] node at (barycentric cs:p=0.7,q=0.3) {$\bm n_e$};
\draw[blue] (f) -- (i);
\draw[blue,below] node at (barycentric cs:p=0.5,q=0.5) {$e$};

\end{tikzpicture}
\caption{Notation for edge quantities}
\label{fig:sketch}
\end{center}
\end{figure}
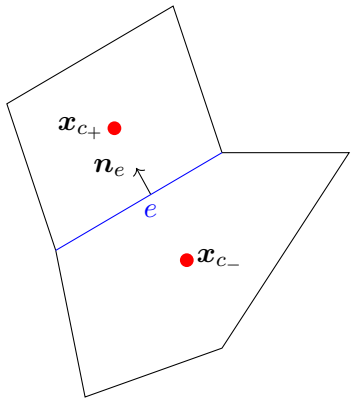
For $e \in \mathcal{E}^b$, one defines $\bm n_e$ as the outward unit normal to $\partial \Omega$.
Finally, for $e \in \mathcal{E}$, we define the tangent vector to $\bm n_e$ as $\bm t_e:= R(\frac\pi2) \bm n_e$, where $R(\frac\pi2)$ is the 2d rotation matrix by an angle $\frac\pi2$.

\subsection{Primal problem}
We consider that the cells interact through Tresca friction forces, and that there can be no inter-penetration between the cells.
Let us first describe the energy $E_e$ between the two neighboring cells $c_{+}$ and $c_{-}$ of Figure \ref{fig:sketch}.
We write the energy of the Tresca friction law (compare to \eqref{eq:energy tresca}) for $e$ as
\begin{equation}
\label{eq:pair energy}
E_e(\bm u) := s_T |e| |[\bm u]_e \cdot \bm t_e|,
\end{equation}
In order to forbid inter-penetration between cells, we add the following constraint:
\begin{equation}
\label{eq:ineq conntraint}
\forall e \in \mathcal E^i, \quad [\bm u]_e \cdot \bm n_e \ge 0.
\end{equation}
We also add a confinement term in order to keep the granular material compressed.
It materializes as a Neumann boundary condition $\sigma \bm n = \bm g$ on $\partial \Omega$.
Assume that $\bm g \in L^2(\partial \Omega)^2$ and let $e \in \mathcal E^\partial$.
We define $\bm g_e := |e|^{-1}\int_e \bm g$ as the average of $\bm g$ over $e$.
Therefore, the work of external loads is defined as
\begin{equation}
E_\text{ext}(\bm u) := \sum_{e \in \mathcal E^\partial} |e| \bm g_e \cdot \bm R_e(\bm u).
\end{equation}
The primal energy $E$ is then defined as
\begin{equation}
\label{eq:primal}
E(\bm u) := \sum_{e \in \mathcal{E}^i}  E_e(\bm u) - E_\text{ext}(\bm u).
\end{equation}
Finally, since \eqref{eq:primal} has a translation invariance, we add the following equality constraint:
\begin{equation}
\label{eq:equality constraint}
\sum_{i=1}^N \bm u_i = 0.
\end{equation}
We therefore define the primal problem as
\begin{equation}
\label{eq:primal problem}
\begin{aligned}
\min_{\bm u \in V} \quad & E(\bm u) \\
\text{subject to} \quad & \forall e \in \mathcal E^i, \quad [\bm u]_e \cdot \bm n_e \ge 0, \\
 & \sum_{i=1}^N \bm u_i = 0.
\end{aligned}
\end{equation}

\subsection{Dual problem}
The problem \eqref{eq:primal problem} is physically relevant but does not contain internal stresses.
The normal stresses will appear as dual variables in the dual problem.
The constrained problem \eqref{eq:primal problem} can be rewritten as an unconstrained minimization of a convex functional.
Let us write as $I$ the indicator function defined as
\[ I(x) = \left\{ \begin{alignedat}{2}
& 0 &\text{ if } x \ge 0, \\
& +\infty &\text{ if } x < 0.
\end{alignedat} \right. \]
We can now define the proper convex function $\mathtt E:V \to \mathbb{R} \cup \{ +\infty \}$ as
\begin{equation}
\label{eq:primal energy convex}
\mathtt E(\bm u) = E(\bm u) + \sum_{e \in \mathcal E^i} I([\bm u]_e \cdot \bm n_e).
\end{equation}
Note that for simplicity, we neglect the equality constraint from \eqref{eq:primal problem} in the exposition of the dual problem.

\begin{prop}
\label{th:existence}
Assume that $\bm g$ is such that $E(\bm u) \to +\infty$ when $\| \bm u\|_V \to +\infty$.
Then, the functional $\mathtt E$ defined in \eqref{eq:primal energy convex} admits a minimizer.
\end{prop}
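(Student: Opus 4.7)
The plan is to invoke the direct method of the calculus of variations in the finite-dimensional vector space $V = \mathbb{P}^0(\mathcal{C})^2$. I need to verify three standard ingredients for $\mathtt E$: that it is proper, lower semicontinuous, and coercive. Since $V$ is finite-dimensional, sublevel sets of a proper, l.s.c., coercive function are compact, and the infimum is then attained on a minimizing sequence by a standard compactness argument.

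First I would check that $\mathtt E$ is proper by exhibiting an admissible point. The choice $\bm u = \bm 0$ gives $[\bm u]_e \cdot \bm n_e = 0 \ge 0$ on every internal edge, so each indicator term vanishes, and clearly $E(\bm 0) = 0$. Hence $\mathtt E(\bm 0) = 0 < +\infty$. Next, for lower semicontinuity, I would observe that $E(\bm u)$ is continuous on $V$: the friction energy $E_e(\bm u) = s_T|e|\,|[\bm u]_e \cdot \bm t_e|$ is the absolute value of a linear form, hence continuous and in fact convex, while $E_\text{ext}$ is linear. The map $\bm u \mapsto [\bm u]_e \cdot \bm n_e$ is linear hence continuous, and $I$ is l.s.c. on $\mathbb R$, so each composition $I([\bm u]_e \cdot \bm n_e)$ is l.s.c. on $V$. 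A finite sum of l.s.c. functions is l.s.c., giving lower semicontinuity of $\mathtt E$.

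Coercivity is the step where the hypothesis on $\bm g$ is used. By assumption $E(\bm u) \to +\infty$ as $\|\bm u\|_V \to +\infty$, and since $I \ge 0$ we have $\mathtt E(\bm u) \ge E(\bm u)$ on all of $V$, so $\mathtt E$ inherits coercivity from $E$. Combining these three properties, any sublevel set $\{\mathtt E \le \alpha\}$ with $\alpha \ge 0$ is a closed (by l.s.c.) bounded (by coercivity) subset of the finite-dimensional space $V$, hence compact. Taking a minimizing sequence $(\bm u_k)$ with $\mathtt E(\bm u_k) \to \inf \mathtt E$, it eventually lies in such a compact sublevel set, so we can extract a convergent subsequence whose limit $\bm u^*$ satisfies $\mathtt E(\bm u^*) \le \liminf_k \mathtt E(\bm u_k) = \inf \mathtt E$ by l.s.c., establishing existence.

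The only genuine subtlety is verifying that the hypothesis on $\bm g$ is not vacuous and that coercivity of $E$ is a reasonable assumption, since $E$ contains a linear loading term $-E_\text{ext}(\bm u)$ that could destroy coercivity if $\bm g$ is not balanced against the friction dissipation; but this is precisely what the hypothesis postulates, so the argument above is essentially routine once coercivity of $E$ is taken as given. Convexity of $\mathtt E$, which follows from convexity of $|\cdot|$ and of $I$ together with linearity of $[\cdot]_e \cdot \bm n_e$ and $E_\text{ext}$, is not needed for existence here but would give uniqueness up to the kernel of the dissipation (which is why the equality constraint \eqref{eq:equality constraint} is imposed in the original problem).
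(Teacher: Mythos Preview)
Your proof is correct and follows essentially the same approach as the paper: the direct method in the finite-dimensional space $V$, using the coercivity hypothesis on $E$ together with (lower semi)continuity to obtain compact sublevel sets and then extract a minimizer. The only cosmetic difference is that you work on all of $V$ via lower semicontinuity of the indicator, while the paper restricts to the closed feasible set $\mathcal V=\{[\bm u]_e\cdot\bm n_e\ge 0\}$ on which $\mathtt E=E$ is continuous---your version is in fact slightly cleaner, since $\mathcal V$ is a closed convex cone rather than a vector subspace as the paper states.
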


\begin{remark}
\label{rem:confinement_hyp}
Since the energy \eqref{eq:primal} does not contain any elastic forces, see \eqref{eq:pair energy}, one needs a confinement term $\bm g$ strong enough to avoid cells slipping to infinity.
Section \ref{sec:num results} contains several examples of such confinement terms $\bm g$.
\end{remark}

\begin{proof}[Proof of Proposition \ref{th:existence}]
Let us define $\mathcal V := \{ \bm v \in V ; \ \forall e \in \mathcal E^i, \quad [\bm u]_e \cdot \bm n_e \ge 0 \}$.
$\mathcal V$ is a closed subspace of $V$ and thus is a vector space.
Note that over $\mathcal V$, one has $\mathtt E = E$ and $\mathtt E$ is therefore continuous over $\mathcal V$.
Using the hypothesis and the fact that $\mathcal V$ is finite-dimensional, once can deduce that $\mathtt E$ admits a global minimizer over $\mathcal V$.
\end{proof}

Let $W := \mathbb P^0(\mathcal E^i)$.
We write dual variables as $\bm f \in W^2$.
The dual energy $\mathtt E^*$ is then defined as the convex conjugate of $\mathtt E$ by
\begin{equation}
    \mathtt E^{*}(\bm f) = \sup_{\bm u \in V} \left\{ \langle \bm f, \bm u \rangle - \mathtt E(\bm u) \right\}.
\end{equation}
The dual problem is then defined as 
\begin{equation}
\label{eq:dual problem}
\max_{\bm f \in W^2} \mathtt E^*(\bm f).
\end{equation}
Note that, since $\mathtt E$ has the physical dimension of an energy, $\bm f$ is interpreted as a collection of forces associated to each internal edge, with the corresponding physical dimension.
Let us recall the following classical result.

\begin{prop}[Weak Duality]
\label{th:weak}
If $\bm u \in V$ is feasible for \eqref{eq:primal problem} and $\bm  f$ is feasible for \eqref{eq:dual problem}, then
\[ \mathtt E(\bm u) \ge \mathtt E^*(\bm f). \]
\end{prop}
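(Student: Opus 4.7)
The plan is the standard Fenchel--Young argument. By the very definition of $\mathtt E^*$ as a supremum over $V$, for every $\bm v \in V$,
$$\mathtt E^*(\bm f) \;\ge\; \langle \bm f, \bm v\rangle - \mathtt E(\bm v).$$
Specializing $\bm v = \bm u$ and rearranging yields the Fenchel--Young inequality
$$\mathtt E(\bm u) + \mathtt E^*(\bm f) \;\ge\; \langle \bm f, \bm u\rangle,$$
which holds for all $\bm u \in V$ and $\bm f \in W^2$ without any feasibility hypothesis. This is really the one-line content of the proposition; everything else is bookkeeping to pass from the symmetric Fenchel--Young form to the asymmetric statement $\mathtt E(\bm u) \ge \mathtt E^*(\bm f)$.

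To carry out that bookkeeping I would unpack the dual feasibility implicit in \eqref{eq:dual problem}. Writing $\bm f_e = f_e^n \bm n_e + f_e^t \bm t_e$ on each internal edge, the requirement that $\mathtt E^*(\bm f)$ be finite forces, edge by edge: the sign condition $f_e^n \le 0$ arising as the conjugate of the indicator $I$ in \eqref{eq:primal energy convex} (non-tensile contact, matching \eqref{eq:contact conditions}); the Tresca bound $|f_e^t| \le s_T$ arising as the conjugate of the friction term in $E_e$; and, globally, a force-balance relation between $\bm f$ and the boundary data $\bm g$ arising from the linear functional $E_\text{ext}$. These three conditions are precisely the polar cone of the primal feasibility cone defined by $[\bm u]_e \cdot \bm n_e \ge 0$ together with the mean-zero constraint \eqref{eq:equality constraint}. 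Combined with primal feasibility, they force the right-hand side $\langle \bm f, \bm u\rangle$ of Fenchel--Young to split edgewise into a sum of nonnegative contact contributions together with a boundary term that matches $E_\text{ext}$, which is exactly what is needed to collapse Fenchel--Young into the weak duality bound.

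The main obstacle I expect is not the Fenchel--Young step itself, which is entirely formal, but the sign bookkeeping in the edgewise computation of $\mathtt E^*$: one has to keep straight the orientation of $\bm n_e$ in the jump $[\bm u]_e \cdot \bm n_e$, the sign convention of $I$, the direction of the Tresca bound, and the force-balance identity linking $\bm f$ to $\bm g$. Once these conventions are aligned, the polar-cone compatibility between primal and dual feasibility produces the inequality directly and no further analysis is needed.
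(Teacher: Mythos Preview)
The paper does not actually prove this proposition: it is introduced with ``Let us recall the following classical result'' and no argument is supplied. So there is no proof in the paper to compare against. Your Fenchel--Young derivation in the first paragraph is exactly the standard one-line justification the authors are implicitly invoking, and at that level your proposal matches what the paper intends.

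One caution about your second and third paragraphs. The paper's dual setup is quite informal: the pairing $\langle \bm f,\bm u\rangle$ between $\bm f\in W^2=\mathbb P^0(\mathcal E^i)^2$ and $\bm u\in V=\mathbb P^0(\mathcal C)^2$ is never explicitly defined, and the dual problem~\eqref{eq:dual problem} carries no constraints beyond the implicit one that $\mathtt E^*(\bm f)<+\infty$. Your instinct to unpack dual feasibility into a sign condition, a Tresca bound, and a force-balance relation is correct, but note that the Fenchel--Young inequality $\mathtt E(\bm u)+\mathtt E^*(\bm f)\ge\langle \bm f,\bm u\rangle$ does not rearrange into the asymmetric form $\mathtt E(\bm u)\ge \mathtt E^*(\bm f)$ without a specific choice of pairing and an explicit computation showing that the cross term is absorbed. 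The paper does not supply that computation either, so your sketch is already at least as detailed as what the authors provide; just be aware that the ``bookkeeping'' you allude to is where the actual content lies, and it depends on conventions the paper leaves unspecified.
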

Note that Proposition \ref{th:weak} shows that, if Proposition \ref{th:existence} applies, then \eqref{eq:dual problem} admits a solution.

\begin{remark}
Since $\mathtt E$ is not strictly convex, one cannot expect to have uniqueness of minimizers.
A similar reasoning applies to $\mathtt E^*$ and \eqref{eq:dual problem}.
\end{remark}

\begin{remark}
\label{rem:confinement_practice}
In practice, if the solver used to compute a minimizer of \eqref{eq:dual problem} does not converge, adding a stronger confining term to $\bm g$ is often enough to obtain a solution of \eqref{eq:dual problem}.
\end{remark}

Let $e \in \mathcal E^i$, we write $\bm f_e = (f^n_e, f^t_e)$.
Assuming \eqref{eq:dual problem} has a feasible solution, the normal stress for a cell $c \in \mathcal M$ and an internal edge $e \in \mathcal E_c \cap \mathcal E^i$ is thus defined as
\begin{equation}
\label{eq:normal stress}
\bm \lambda_{e,c} := |e|^{-1} (\bm n_c \cdot \bm n_e) \left(f_e^n \bm n_e + f_e^t \bm t_e \right). 
\end{equation}

\begin{remark}
The principle of action-reaction is intrinsically taken into account here since for $c,c' \in \mathcal M$ and $e \in \mathcal E_c \cap \mathcal E_{c'} \neq \emptyset$, one has $\bm \lambda_{e,c} = - \bm \lambda_{e,c'}$.
That is due to the fact that one has, for instance, $\bm n_c = \bm n_e$ and $\bm n_{c'} = -\bm n_e$.
\end{remark}

\section{Stress reconstruction}
\label{sec:reconstruction}
Solving \eqref{eq:dual problem} gives us, through \eqref{eq:normal stress}, a normal stress in each internal edge $e \in \mathcal E^i$, which are completed by the normal stresses $\bm g$ imposed as Neumann boundary conditions on $\mathcal E^\partial$.
Let us consider a cell $c \in \mathcal M$ as in Figure \ref{fig:stress reconstruction}.
\begin{figure}[!htp]
\centering
\begin{tikzpicture}
  \coordinate (A) at (0,0);
  \coordinate (B) at (2,0);
  \coordinate (C) at (2.5,2);
  \coordinate (D) at (1.5,3.5);
  \coordinate (E) at (0.5,3);
  \coordinate (F) at (-0.5,1.5);

  \draw[thick] (A) -- (B) -- (C) -- (D) -- (E) -- (F) -- cycle; 
  
  \coordinate (G) at (barycentric cs:A=1/6,B=1/6,C=1/6,D=1/6,E=1/6,F=1/6);
  \draw node at (G) {$\bullet$};
  \draw node[below] at (G) {$\bm x_c$};

  \draw[dashed, thin] (G) -- (A);
  \draw[dashed, thin] (G) -- (B);
  \draw[dashed, thin] (G) -- (C);
  \draw[dashed, thin] (G) -- (D);
  \draw[dashed, thin] (G) -- (E);
  \draw[dashed, thin] (G) -- (F);

  \path (B) -- (A) coordinate[midway] (M);
  \draw[->, thick, red] (M) -- ++(0,-0.5);
  \draw node[red,above] at (M) {$\bm n_c$};
  
  \path (B) -- (C) coordinate[midway] (MBC);
  \node[blue] at (MBC) {$\bullet$};
  \node[blue,right] at (MBC) {$\bm x_e$};
  \node[below] at (2.3, .8){$e$};
  
  \draw[->, thick, red] (MBC) -- ++(-2/4.5,1/2/4.5);
  \node[above,red] at (MBC) {$\bm n_e$};
\end{tikzpicture}
\caption{Stress reconstruction in an internal cell.}
\label{fig:stress reconstruction}
\end{figure}
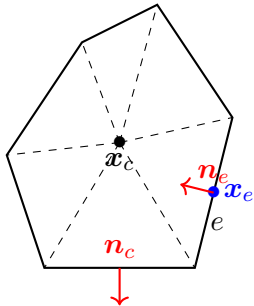
We define as $H(\div;c)^2 := \{ \sigma \in L^2(c)^{2 \times 2}; \div(\sigma) \in L^2(c)^2 \}$.
The associated norm is defined for $\sigma \in H(\div;c)$ by $\| \sigma \|_{H(\div;c)}^2 := \| \sigma \|_{L^2(c)}^2 + \| \div(\sigma) \|_{L^2(c)}^2$.
The idea of the reconstruction is to compute in each cell $c \in \mathcal M$, a field $\sigma_c \in H(\div;c)^2$, so that $\div(\sigma_c) = \bm 0$ in $c$, since no volume load is applied to $\Omega$.
This is performed by solving local problems with finite elements, as described in the following.
Note that, if volumes loads were applied to $\Omega$, one could modify the continuity equation in order to take the volumes loads into account.

\subsection{Geometric setup}
Let $c \in \mathcal M$.
We write as $\bm n_c$ the outward unit normal to $\partial c$ and the barycentre of the internal edge $e \in \mathcal E^i$ as $\bm x_e$.
Since all cells are convex, the boundary $\partial c$ of $c \in \mathcal M$ is Lipschitz \cite{grisvard2011elliptic}.
Also, since any cell $c \in \mathcal M$ is convex, it can be triangulated by choosing an edge $e \in \mathcal E_c$ and the center $\bm x_c$, as sketched in Figure \ref{fig:stress reconstruction}.
Let us write as $\mathcal M_c$ this triangulation of the cell $c$.

Let $\mathfrak V := \mathbb {RT}_0(\mathcal M_c)^2$, where $\mathbb {RT}_0(\mathcal M_c)$ is the space of lowest-order Raviart--thomas elements, see \cite{MR4242224}.
We will use the Neumann boundary condition $\bm g$ and the solutions of \eqref{eq:dual problem} $\bm \lambda$ as Dirichlet boundary conditions.
We define the solutions set as
\[ \mathfrak V_D := \{ \sigma \in \mathfrak V; \, \forall e \in \mathcal E_c \cap \mathcal E^\partial, \, \sigma(\bm x_e) \bm n_c = \bm g_e \text{ and } \forall e \in \mathcal E_c \cap \mathcal E^i, \, \sigma(\bm x_e) \bm n_c = \bm \lambda_{e,c} \}.  \]
The associated homogeneous space is defined as
\[ \mathfrak V_0 := \{ \sigma \in \mathfrak V; \, \forall e \in \mathcal E_c, \, \sigma(\bm x_e) \bm n_c = \bm 0 \}. \]
Note that, for $\sigma \in \mathfrak V$, one does not have $\sigma^\mathsf{T} = \sigma$.
This condition will be imposed weakly in the following through a Lagrange multiplier.
Let us define the space of Lagrange multipliers $Q := \mathbb P^0(\mathcal M_c)$, which is the space of piecewise constant functions.

\subsection{Discrete problem}
The formulation of this problem has been inspired by \cite{arnold1984peers,arnold2007mixed}.
For $\sigma, \tau \in H(\div;c)^2$, we define the bilinear form
\[ a(\sigma, \tau) := \int_c \mathrm{div}(\sigma) \cdot \mathrm{div}(\tau). \]
This bilinear form consists in a least-square formulation of the equation $\mathrm{div}(\sigma) = \bm 0$.
For $\sigma \in H(\div;c)^2$ and $q \in L^2(c)$, we define the bilinear form
\[ b(\sigma, q) := \int_c \sigma : j(q), \]
where $j(q) := \begin{pmatrix}
0 & q \\
-q & 0
\end{pmatrix}$ is a generic skew-symmetric matrix.
Note that one has $|j(q)| = \sqrt{2} |q|$ when considering the Frobenius norm on matrices.
This bilinear form is used to weakly impose $\sigma^\mathsf{T} = \sigma$ in $c$.
The reconstruction consists in searching for $(\sigma, p) \in \mathfrak V_D \times Q$ such that
\begin{equation}
\label{eq:reconstruction}
\begin{aligned}
a(\sigma, \tau) + b(\tau, p) = 0, & \quad \forall \tau \in \mathfrak V_0, \\
b(\sigma, q) = 0, & \quad \forall q \in Q.
\end{aligned}
\end{equation}

\begin{prop}
\label{th:recon}
There exists a unique solution of \eqref{eq:reconstruction}.
\end{prop}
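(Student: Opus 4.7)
The plan is to treat \eqref{eq:reconstruction} as a finite-dimensional square linear mixed (Brezzi-type) system and to prove well-posedness by reducing to a homogeneous problem and showing that its kernel is trivial. Denote by $N := \#\mathcal E_c$ the number of edges of $c$, and by $\bm v_1,\ldots,\bm v_N$ its vertices in cyclic order, so that $\mathcal M_c = \{T_i\}_{i=1}^N$ with $T_i$ the sub-triangle of vertices $\bm x_c,\bm v_i,\bm v_{i+1}$. One has $\dim \mathfrak V_0 = 2N$ (two rows, one free dof per internal edge of $\mathcal M_c$) and $\dim Q = N$, so the system is square: existence and uniqueness will both follow from triviality of the kernel.

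I would first absorb the inhomogeneous Dirichlet data via a lift. Because the $\mathbb{RT}_0$ dofs are precisely the normal components at edge midpoints, one can build an explicit $\sigma_D \in \mathfrak V_D$ by setting $\sigma_D(\bm x_e)\bm n_c = \bm g_e$ or $\bm \lambda_{e,c}$ on the boundary edges in $\mathcal E_c$ and zeroing all internal-edge dofs of $\mathcal M_c$. Writing $\sigma = \sigma_D + \sigma_0$ with $\sigma_0 \in \mathfrak V_0$ recasts \eqref{eq:reconstruction} as an equivalent system with homogeneous trial space and with right-hand sides $-a(\sigma_D, \tau)$ and $-b(\sigma_D, q)$, so it suffices to show the associated homogeneous problem has only the trivial solution.

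Assume $(\sigma_0, p) \in \mathfrak V_0 \times Q$ solves the homogeneous system. Testing the first equation with $\tau = \sigma_0$ and using the second equation (which yields $b(\sigma_0, p) = 0$) gives $a(\sigma_0,\sigma_0) = \|\div \sigma_0\|_{L^2(c)}^2 = 0$, so $\div \sigma_0 \equiv \bm 0$. Each row of $\sigma_0$ is thus a divergence-free $\mathbb{RT}_0(\mathcal M_c)$ field with vanishing normal flux on $\partial c$. Since $\mathcal M_c$ is a fan triangulation, the divergence-free constraint propagates cyclically around the internal edges $[\bm x_c,\bm v_i]$ and forces their normal fluxes to coincide, yielding a one-dimensional space spanned by a field $\bm v^{\ast}$ which, by a direct computation with the standard Raviart--Thomas basis, is piecewise constant with $\bm v^{\ast}|_{T_i}$ proportional to $\bm v_{i+1} - \bm v_i$. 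Writing the two rows of $\sigma_0$ as $a\bm v^{\ast}$ and $b\bm v^{\ast}$, the weak-symmetry equations $\int_{T_i}(\sigma_{0,12} - \sigma_{0,21}) = 0$ reduce to $a(\bm v_{i+1} - \bm v_i)_y = b(\bm v_{i+1} - \bm v_i)_x$ for every $i$; since the boundary-edge vectors of any non-degenerate convex polygon span $\mathbb R^2$, this forces $a = b = 0$ and hence $\sigma_0 = 0$.

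With $\sigma_0 = 0$, the first equation collapses to $b(\tau, p) = 0$ for all $\tau \in \mathfrak V_0$, and it remains to prove that the $N$ linear functionals $L_i : \tau \mapsto \int_{T_i}(\tau_{12} - \tau_{21})$ are linearly independent in $\mathfrak V_0^{\ast}$, which is precisely the discrete inf-sup condition for $b$ on the pair $(\mathfrak V_0, Q)$. My plan is to write each $L_i$ explicitly in terms of the $2N$ internal-edge dofs of $\tau$ using the Raviart--Thomas basis, producing a sparse $N \times 2N$ matrix whose entries depend only on the positions of the vertices of $c$, and to show it has full row rank by exhibiting, for each $T_i$, an appropriate test field $\tau_i \in \mathfrak V_0$ whose contribution on $T_i$ is nonzero and on the other triangles can be cancelled. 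The main obstacle is this rank check: it depends on the local geometry of the Voronoi cell and on how neighbouring sub-triangles of the fan couple through their shared internal edge, whereas the lift and the proof that $\sigma_0 = 0$ are clean once the one-dimensional structure of divergence-free zero-flux $\mathbb{RT}_0$ fields is identified.
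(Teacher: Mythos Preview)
Your route differs from the paper's. The paper applies the abstract Brezzi/BNB framework: after lifting the boundary data via surjectivity of the trace, it proves coercivity of $a$ on a curl-free subspace $Z$ of $H_0(\div;c)^2$ by combining the Poincar\'e inequality with Costabel's Maxwell estimate, and then argues the discrete inf--sup for $b$ by testing with piecewise constant tensors in $\mathfrak V_0\cap\mathbb P^0(\mathcal M_c)^{2\times 2}$ so that the Cauchy--Schwarz bound $b(\tau,q)\le\sqrt2\,\|\langle\tau\rangle\|_{L^2}\|q\|_{L^2}$ is saturated. You instead exploit that the system is finite-dimensional and square, lift the data explicitly through the $\mathbb{RT}_0$ edge dofs, and analyse the homogeneous kernel directly. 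Your identification of the one-dimensional space of divergence-free, zero-outer-flux $\mathbb{RT}_0$ fields on the fan (forcing each row of $\sigma_0$ to be piecewise constant and tangent to the outer edges), followed by the use of the per-triangle weak-symmetry constraints together with the fact that the edge vectors of a non-degenerate convex polygon span $\mathbb R^2$, is correct and more transparent than the paper's coercivity step.

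The genuine gap is precisely where you place it: you have not shown that $b(\tau,p)=0$ for all $\tau\in\mathfrak V_0$ implies $p=0$, i.e.\ the discrete inf--sup. Your proposed strategy of producing, for each $T_i$, a test field $\tau_i\in\mathfrak V_0$ that isolates the contribution on $T_i$ cannot work in the form stated: any $\tau\in\mathfrak V_0$ built from a single nonzero internal-edge dof is necessarily supported on the \emph{two} adjacent sub-triangles of the fan, so the functionals $L_i$ are coupled through a cyclic bidiagonal $N\times 2N$ matrix whose entries depend on the centroids and vertex positions, and full row rank is not automatic from that structure alone. To close your argument along your own lines you would need an explicit rank computation for this matrix (using the convexity/Voronoi geometry of $c$), or else adopt the paper's averaging device and construct a piecewise-constant $\tau$ realising $\langle\tau\rangle_T=j(q)|_T$, which is how the paper obtains its inf--sup bound.
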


\begin{remark}
Note that Proposition \ref{th:recon} shows that from a given $\bm \lambda \in W^2$, solution of \eqref{eq:dual problem}, one can compute a unique $\sigma \in \mathfrak V_D$.
But since $\bm \lambda \in W^2$, solution of \eqref{eq:dual problem}, is not unique, the reconstructed stress $\sigma$ is not unique either.
\end{remark}

\begin{remark}
\label{rem:torque}
The use of $\mathbb{RT}_0$ elements implies a subtle consistency requirement regarding rotations.
In standard elasticity, the symmetry of the stress tensor $\sigma = \sigma^T$ (which we impose weakly via \eqref{eq:reconstruction}) is equivalent to the local balance of angular momentum (torque balance).
However, since our discrete model (Section \ref{sec:model}) neglects rotational equilibrium equations, the discrete forces/fluxes $\bm \lambda$ might not exactly sum to zero torque on each cell.
\end{remark}

\begin{proof}[Proof of Proposition \ref{th:recon}]
In order to prove that \eqref{eq:reconstruction} admits a unique solution, we want to use the BNB theorem for mixed systems, see \cite[Theorem 2.34]{ern_guermond}.
Let us first lift the Dirichlet boundary conditions.
Using the surjectivity of the trace operator, see \cite[Theorem~4.15]{MR4242224}, there exists $\mu \in H(\div;c)^2$, such that for all $e \in \mathcal E_c \cap \mathcal E^\partial$, $\int_e \mu \bm n = |e| \bm g_e$  and for all $e \in \mathcal E_c \cap \mathcal E^i$, $\int_e \mu \bm n = |e| \bm \lambda_{e,c}$.
For $\tau \in \mathfrak V_0$, let us define the linear form 
\[ F(\tau) := -\int_c \div(\mu) \cdot \div(\tau), \]
which is continuous over $\mathfrak V_0$ and for $q \in Q$,
\[ G(q) := - \int_c \mu : j(q), \]
which is linear continuous over $Q$.
We now search for $(\sigma_0, p) \in \mathfrak V_0 \times Q$,
\[ \begin{aligned}
a(\sigma_0, \tau) + b(\tau, p) = L(\tau), & \quad \forall \tau \in \mathfrak V_0, \\
b(\sigma_0, q) = G(q), & \quad \forall q \in Q.
\end{aligned} \]

Let us define $H_0(\div;c)^2 := \{ \sigma \in H(\div;c)^2; \sigma \bm n = \bm 0 \text{ on } \partial \Omega \}$.
We propose to show first that $a$ is coervice over $Z := H_0(\div;c)^2 \cap \{\sigma \in L^2(\Omega)^{2 \times 2} ; \, \mathrm{curl}(\sigma) = \bm 0 \}$, where the 2d $\mathrm{curl}$ operator is applied row-wise.
To prove that coercivity, we need to show that there exists $C > 0$, such that for any $\sigma \in Z$, one has
\[ \| \sigma \|_{L^2(c)} \le C \| \div(\sigma) \|_{L^2(c)}. \]
Let us reason by density.
Let $\varphi \in C^\infty_c(c)^{2 \times 2}$, such that $\mathrm{curl}(\varphi) = \bm 0$.
Since $\varphi \in H^1_0(c)^{2 \times 2}$, we can apply the Poincar\'e inequality and there exits $C > 0$,
\[ \|\varphi \|_{L^2(c)} \le C \|\nabla \varphi \|_{L^2(c)}. \]
Using \cite[Theorem~2.3]{costabel1999maxwell} and the fact that $\mathrm{curl}(\varphi) = \bm 0$, one has
\[ \| \varphi \|_{L^2(c)} \le C \| \div(\varphi) \|_{L^2(c)}. \]
We now consider a sequence $(\varphi_n)_n \in C^\infty_c(c)^{2 \times 2}$, such that $\mathrm{curl}(\varphi_n) = \bm 0$ and $\sigma \in Z$ such that $\| \varphi_n - \sigma \|_{H(\div;c)} \to 0$, as $n \to +\infty$.
Thus, as $n \to +\infty$, $\| \varphi_n - \sigma \|_{L^2(c)} \to 0$ and $\| \div(\varphi_n - \sigma) \|_{L^2(c)} \to 0$.
One has
\[ \begin{aligned}
\| \div(\sigma) \|_{L^2(c)} & = \| \div(\varphi_n) |_{L^2(c)} + \| \div(\varphi_n - \sigma) |_{L^2(c)}, \\
& \ge \frac1C \| \varphi_n \|_{L^2(c)} + \| \div(\varphi_n - \sigma) |_{L^2(c)} \mathop{\longrightarrow}_{n \to +\infty} \frac1C \| \sigma \|_{L^2(c)}. 
\end{aligned} \]
Therefore,
\[ \| \sigma \|_{H(\div;c)}^2 = \| \div(\sigma) \|^2_{L^2(c)} + \| \sigma \|^2_{L^2(c)} \le \left(1 + C^2 \right) \| \div(\sigma) \|^2_{L^2(c)}. \]
Thus, one has
\[ a(\sigma, \sigma) = \| \div(\sigma) \|^2_{L^2(c)} \ge  \frac1{1 + C^2} \| \sigma \|_{H(\div;c)}^2, \]
which shows the coercivity over $Z$.
To finish, we note that $\mathfrak V_0$ is conforming in $Z$ in the sense that $\mathfrak V_0 \subset Z$.

In order to apply the BNB theorem for mixed formulation \cite[Theorem~2.34]{ern_guermond}, it remains to show the second condition of the theorem.
Let $q \in Q$ and $\tau \in \mathfrak V_0$.
One has
\[ \begin{aligned}
b(\tau, q) = \sum_{T \in \mathcal M_c} \int_T \tau : j(q) & = \sum_{T \in \mathcal M_c} |T| \skew(\left<\tau\right>_T) : j(q), \\
& \le \sum_{T \in \mathcal M_c} |T| \left|\skew(\left<\tau\right>_T)\right| \sqrt{2} |q|, \\
& \le \sqrt{2} \left( \sum_{T \in \mathcal M_c} |T| \left|\skew(\left<\tau\right>_T)\right|^2 \right)^{\frac12} \left( \sum_{T \in \mathcal M_c} |T| |q|^2 \right)^{\frac12}, \\
& = \sqrt{2} \left( \sum_{T \in \mathcal M_c} \| \skew(\left< \tau \right>_T) \|^2_{L^2(T)} \right)^{\frac12} \|q \|_{L^2(c)}
\end{aligned} \]
where $\left<\tau \right>_T$ is the average over $T$,
$\skew$ is the skew-symmetric part of a matrix, and we used the Cauchy--Schwarz inequality twice.
Note that since $\tau$ is affine in each $T \in \mathcal M_c$, one has $\left<\tau \right>_T = \tau(\bm x_T)$, for all $T \in \mathcal M_c$.

Let us now consider $\tau \in \mathfrak V_0$ such that for all $T \in \mathcal M_c$, $\skew(\tau(\bm x_T)) = j(q_{|T})$.
Let us explain why this is always possible when considering the kind of meshes $\mathcal M_c$ as in Figure \ref{fig:stress reconstruction}.
Let $n \in \mathbb N$ be the number of cells in $\mathcal M_c$.
We thus have $6n$ dofs for $\tau \in \mathfrak V$.
For $\tau \in \mathfrak V_0$, the boundary conditions imposed on $\partial c$ consist in $2n$ equations.
There are as many cells as internal edges, see Figure \ref{fig:stress reconstruction}.
The normal continuity conditions at internal edges consist in $2n$ equations.
Finally imposing $\skew(\tau(\bm x_T)) = j(q_{|T})$, for all $T \in \mathcal M_c$, consists in $n$ equations.
To conclude, we have $5n$ equations for $6n$ dofs.

With such a $\tau \in \mathfrak V_0$, the two Cauchy--Schwarz inequalities above are actually equalities.
One thus has
\[ b(\tau, q) = \sqrt{2} \left( \sum_{T \in \mathcal M_c} \| \skew(\left< \tau \right>_T) \|^2_{L^2(T)} \right)^{\frac12} \|q \|_{L^2(c)}. \]
Also, one has
\[ \sum_{T \in \mathcal M_c} \| \skew(\left< \tau \right>_T) \|^2_{L^2(T)} \le \sum_{T \in \mathcal M_c} \| \left< \tau \right>_T \|^2_{L^2(T)} \le \sum_{T \in \mathcal M_c} \| \tau \|^2_{L^2(T)} = \| \tau \|^2_{L^2(c)} \le \| \tau \|^2_{H(\div;c)}. \]
Therefore,
\[ \frac{b(\tau,q)}{\| \tau \|_{H(\div;c)}} \le \frac{b(\tau,q)}{\left(\sum_{T \in \mathcal M_c} \| \skew(\left< \tau \right>_T) \|^2_{L^2(T)} \right)^{\frac12}} = \sqrt{2} \| q \|_{L^2(c)}. \]
The supremum over $\varphi \in \mathfrak V_0 $ is thus achieved and one has
\[ \inf_{q \in Q} \sup_{\varphi \in \mathfrak V_0} \frac{b(\tau,q)}{\| \varphi \|_{H(\div;c)} \| q \|_{L^2(c)}} \ge \sqrt{2}, \]
which finishes the proof.
\end{proof}

\section{Numerical results}
\label{sec:num results}
We first describe how the normal stresses are computed in practice.
Then, we present a few tests to ensure the consistency of the proposed method.
Finally, we show the robustness of the method on unstructured meshes.
Note that, even though our methodology only requires convex polygonal cells, the numerical examples below use Voronoi meshes as this is a convenient way to generate structured/unstructured polygonal meshes with convex cells.

\subsection{Linear program}
Let us rewrite \eqref{eq:primal problem} as a linear program.
This allows us to use very efficient linear solvers.
For each internal edge $e \in \mathcal E^i$, we add a new variable $v_e$ such that $v_e \ge |[\bm u]_e \cdot \bm t_e|$.
Note that this inequality constraint can be written with two linear constraints.
Let us write $v := (v_e)_e \in W$.
Therefore, the linear program is now
\begin{equation}
\label{eq:primal linear program}
\begin{aligned}
\min_{(\bm u, v) \in V \times W} \quad & \sum_{e \in \mathcal E^i} s_T |e| v_e -  E_\text{ext}(\bm u)\\
\text{subject to} \quad & \forall e \in \mathcal E^i, \quad [\bm u]_e \cdot \bm n_e \ge 0, \\
& \forall e \in \mathcal E^i, \quad v_e \ge [\bm u]_e \cdot \bm t_e, \\
& \forall e \in \mathcal E^i, \quad v_e \ge -[\bm u]_e \cdot \bm t_e, \\
 & \sum_{i=1}^N \bm u_i = 0.
\end{aligned}
\end{equation}

\subsection{Implementation}
The implementation is performed in {\em Python}.
The package {\em pyvoro}, which uses the library {\em Voro++} \cite{rycroft2009voro++} is used to generate the Voronoi mesh.
The package {\em cvxopt} \cite{vandenberghe2010cvxopt} is used to solve \eqref{eq:primal linear program} and provide the normal stresses by computing the dual linear program.
Finally, {\em Firedrake} \cite{FiredrakeUserManual} is used to solve \eqref{eq:reconstruction}.

\subsection{Verification tests}
For the first test case, we consider $\Omega = (0,1)^2$ and the mesh sketched in Figure \ref{fig:domain test}.
\begin{figure}[!htp]
\centering
\begin{subfigure}[b]{0.45\textwidth}
        \centering
        \includegraphics[scale=.3]{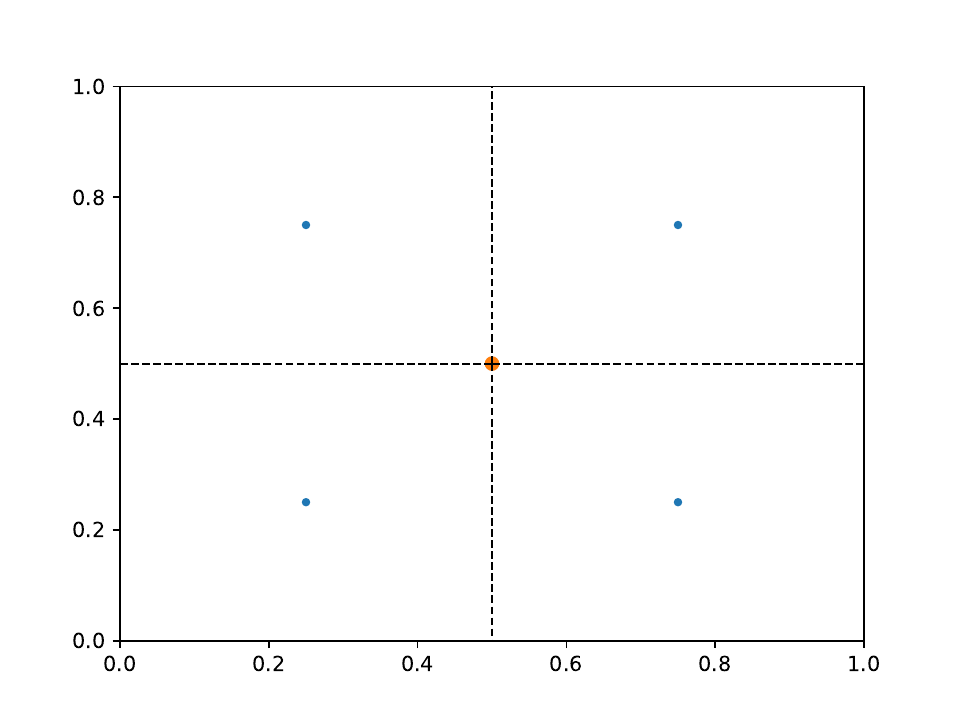}
        \
\caption{Mesh $\mathcal M$ of $\Omega$}
        \label{fig:domain test}
    \end{subfigure}
    \hfill
    \begin{subfigure}[b]{0.5\textwidth}
        \centering
        \includegraphics[scale=.25]{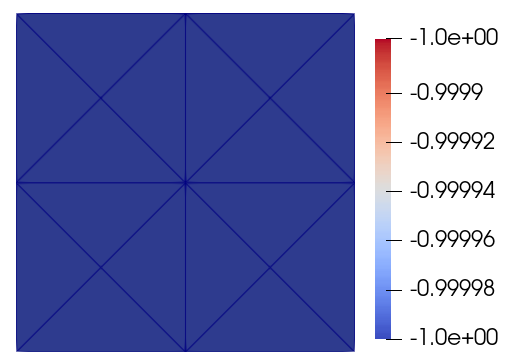}
        \caption{Reconstructed stresses $\sigma_{ij}$.}
        \label{fig:stress test}
    \end{subfigure}
    \caption{Verification test 1}
\end{figure}
The Tresca friction coefficient is taken as $s_T := 10$.
Let $S = - \begin{pmatrix}
1 & 1 \\ 1 & 1 \\
\end{pmatrix}$.
The boundary condition applied to the domain is $\bm g := S\bm n$.
Figure \ref{fig:stress test} shows the resconstructed stress $\sigma$.
Note that $\sigma_{11}=\sigma_{22} = \sigma_{12}=\sigma_{21}$ in Figure \ref{fig:stress test} and therefore only one picture is shown.
The result we obtain is consistent with the imposed boundary condition since we expect $\sigma = S$ for this test case.

For the second test case, we consider the same domain, material parameters and boundary conditions.
However, we consider the mesh sketched in Figure \ref{fig:domain test 2}.
\begin{figure}[!htp]
\centering
\begin{subfigure}[b]{0.45\textwidth}
        \centering
        \includegraphics[scale=.2]{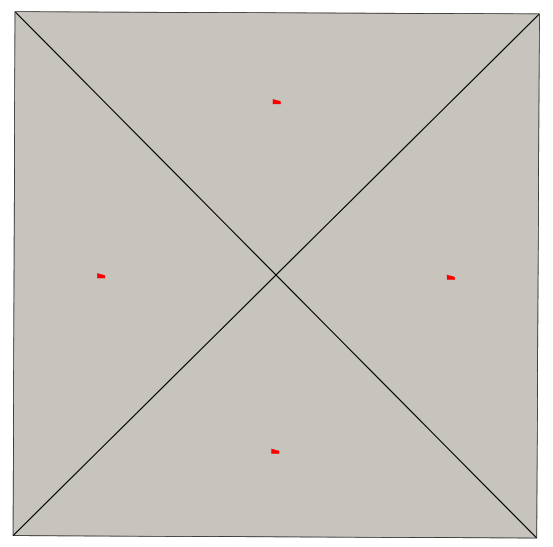}
        \
\caption{Mesh $\mathcal M$ of $\Omega$}
        \label{fig:domain test 2}
    \end{subfigure}
    \hfill
    \begin{subfigure}[b]{0.45\textwidth}
        \centering
        \includegraphics[scale=.4]{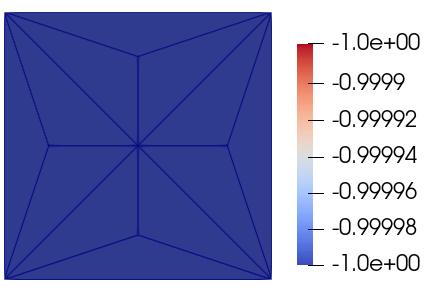}
        \caption{Reconstructed stress $\sigma_{ij}$.}
        \label{fig:stress test 2}
    \end{subfigure}

    \caption{Verification test 2}
\end{figure}

Figure \ref{fig:stress test 2} shows the reconstructed stress $\sigma$.
The result we obtain is consistent with the imposed boundary condition since we expect $\sigma = S$ for this test case too.

\subsection{Shear test}
We consider $\Omega = (0,1)^2$, the mesh shown in Figure \ref{fig:brick wall mesh} and the boundary conditions sketched in Figure \ref{fig:brick wall BC}.
\begin{figure}[!htp]
\centering
\begin{subfigure}[b]{0.45\textwidth}
        \centering
        \includegraphics[scale=.3]{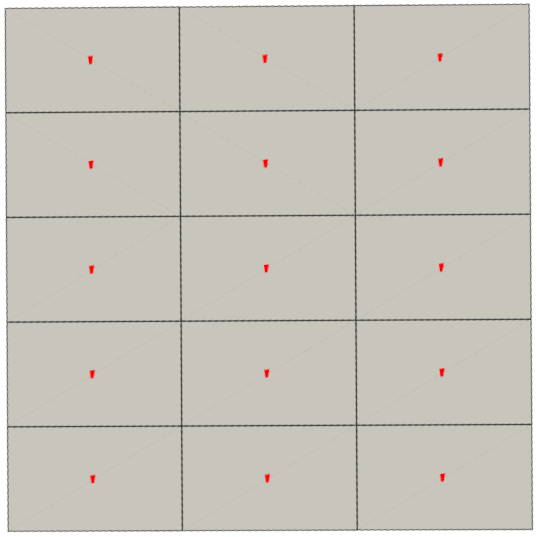}
	\caption{Mesh $\mathcal M$ of $\Omega$.}
     \label{fig:brick wall mesh}
    \end{subfigure}
    \hfill
    \begin{subfigure}[b]{0.5\textwidth}
        \centering
        
\begin{tikzpicture}[scale=3, line cap=round, line join=round, >=Latex]
\draw[thick] (0,0) rectangle (1,1);
\node at (.5,.5) {$\Omega$};

\foreach \x in {0,0.25,0.5,0.75}{
  \draw[-{Latex[length=2.2mm]},very thick] (\x,1.05) -- ++(0.24,0);
}
\node at (0.5,1.15) {$\bm g_e = \bm e_1$};

\foreach \x in {0.25,0.5,0.75,1}{
  \draw[-{Latex[length=2.2mm]},very thick] (\x,-0.05) -- ++(-0.24,0);
}
\node at (0.5,-0.15) {$\bm g_e = -\bm e_1$};
\end{tikzpicture}            
        
        \caption{Boundary conditions}
        \label{fig:brick wall BC}
    \end{subfigure}
    \caption{Shear test}
\end{figure}
We write the usual direct orthonormal basis of $\mathbb R^2$ as $(\bm e_1, \bm e_2)$.
This test consists in shear loading a brick wall.
Note that the weight of the bricks is ignored here.
The computed stresses are presented in Figure \ref{fig:brick wall stress}.
\begin{figure}[!htp]
\centering
    \begin{subfigure}[b]{0.45\textwidth}
        \centering
        \includegraphics[scale=.5]{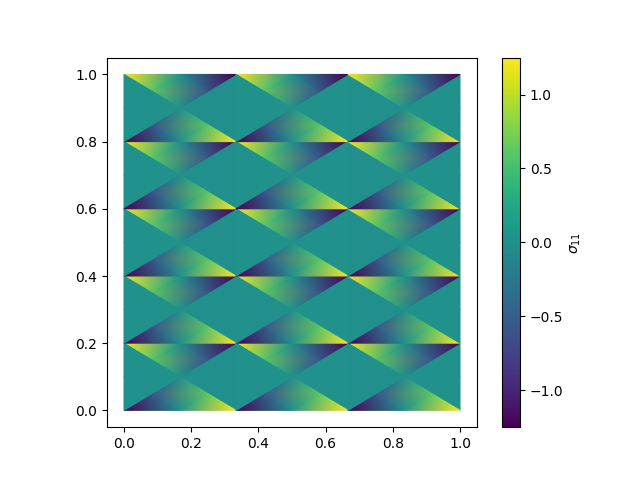}
        \caption{Reconstructed stress $\sigma_{11}$}
    \end{subfigure}
    \hfill
    \begin{subfigure}[b]{0.45\textwidth}
        \centering
        \includegraphics[scale=.5]{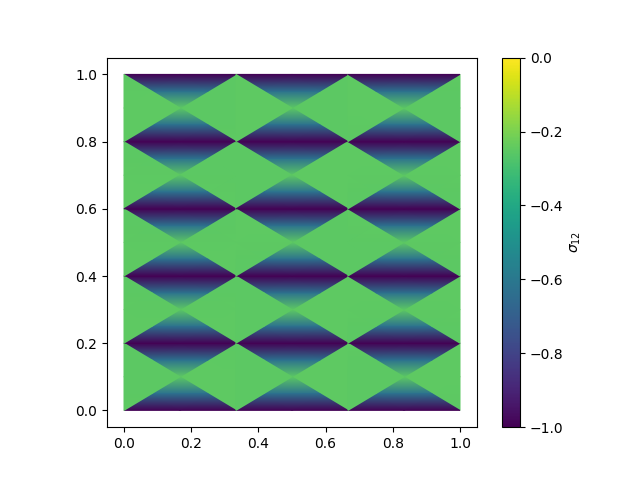}
        \caption{Reconstructed stress $\sigma_{12}$}
    \end{subfigure} \\
    \begin{subfigure}[b]{0.45\textwidth}
        \centering
        \includegraphics[scale=.5]{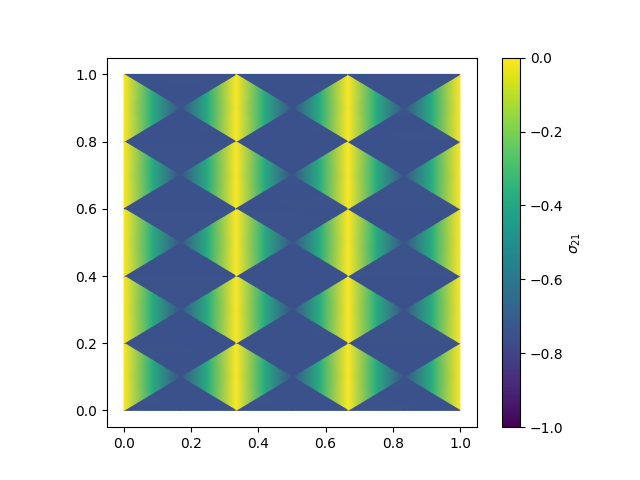}
        \caption{Reconstructed stress $\sigma_{21}$}
    \end{subfigure}
    \hfill
    \begin{subfigure}[b]{0.45\textwidth}
        \centering
        \includegraphics[scale=.5]{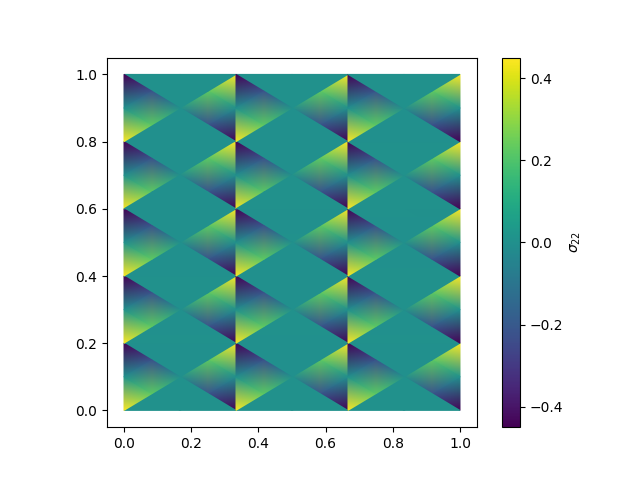}
        \caption{Reconstructed stress $\sigma_{22}$}
    \end{subfigure}
    \caption{Shear test}
    \label{fig:brick wall stress}	
\end{figure}
Note that the stresses are discontinuous at the interfaces between bricks since one only has $\sigma \in H(\div;c)$, for all $c \in \mathcal M$.

\subsection{Homogeneous compression test}
We consider $\Omega = (0,1)^2$ and the mesh sketched in Figure \ref{fig:domain voro}.
\begin{figure}[!htp]
	\centering
    \includegraphics[scale=.3]{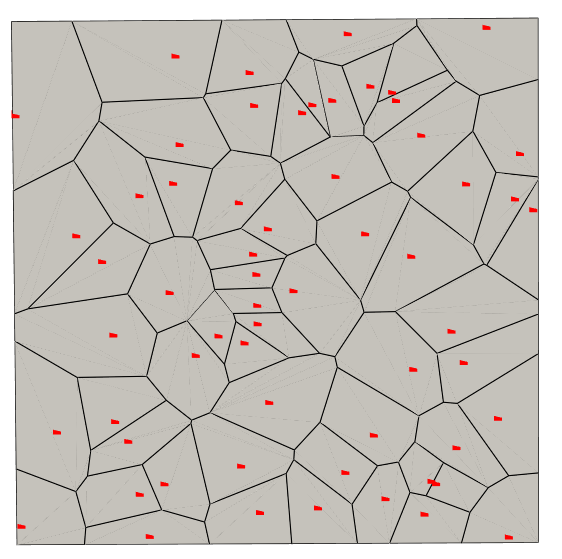}
	\caption{Homogeneous compression test: mesh $\mathcal M$ of $\Omega$.}
     \label{fig:domain voro}
\end{figure}
It contains 60 Voronoi cells.
The boundary conditions consists in imposing $\bm g := -\bm n$ on $\partial \Omega$.
The results are shown in Figure \ref{fig:stress hom compression}.
\begin{figure}[!htp]
\centering
    \begin{subfigure}[b]{0.45\textwidth}
        \centering
        \includegraphics[scale=.5]{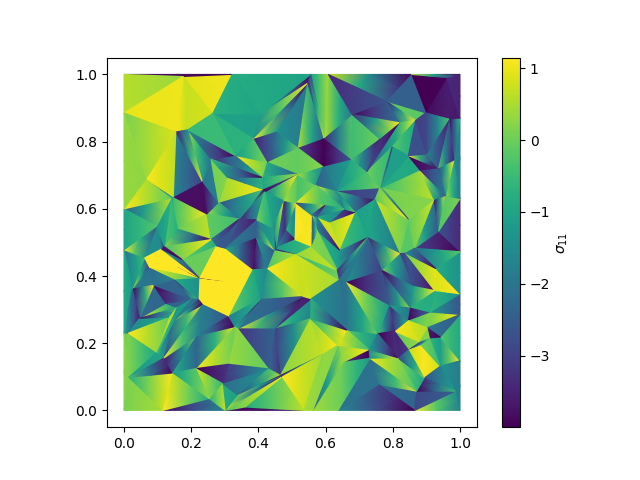}
        \caption{Reconstructed stress $\sigma_{11}$}
    \end{subfigure}
    \hfill
    \begin{subfigure}[b]{0.45\textwidth}
        \centering
        \includegraphics[scale=.5]{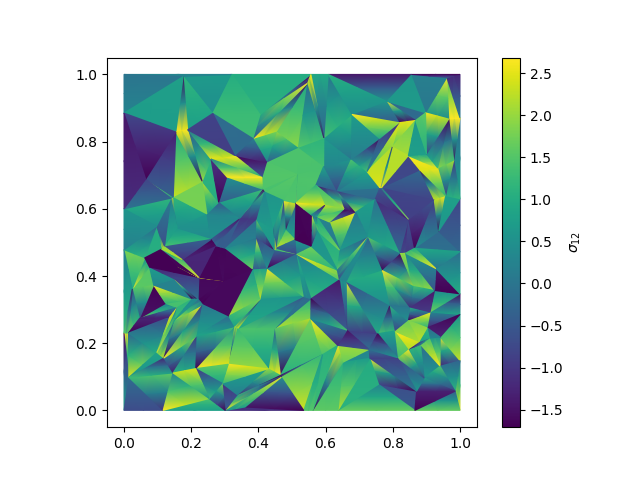}
        \caption{Reconstructed stress $\sigma_{12}$}
    \end{subfigure} \\
    \begin{subfigure}[b]{0.45\textwidth}
        \centering
        \includegraphics[scale=.5]{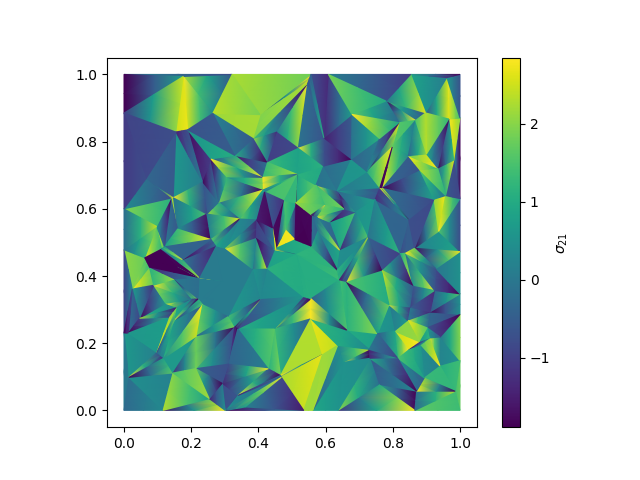}
        \caption{Reconstructed stress $\sigma_{21}$}
    \end{subfigure}
    \hfill
    \begin{subfigure}[b]{0.45\textwidth}
        \centering
        \includegraphics[scale=.5]{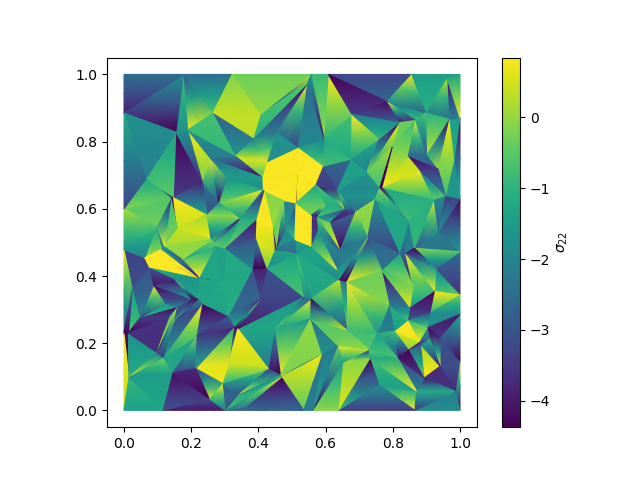}
        \caption{Reconstructed stress $\sigma_{22}$}
    \end{subfigure}
    \caption{Homogeneous compression test}
    \label{fig:stress hom compression}	
\end{figure}
We notice that the recomputed stress $\sigma$ in Figure \ref{fig:stress hom compression} is not homogeneous.

\section{Discussion and Future Work}
\label{sec:discussion}
The numerical results presented in Section \ref{sec:num results} highlight several key features of the proposed model which warrant further discussion in light of the physical theory of granular packings.

In the homogeneous compression test (Figure \ref{fig:stress hom compression}), we observed that the solver identifies a non-homogeneous stress state, despite the boundary conditions and domain being consistent with a uniform stress state $\sigma = -I_2$.
As discussed in Section \ref{sec:static_indet}, this is a direct consequence of the static indeterminacy (or hyperstaticity) of jammed packings.
The uniform solution $\sigma = -I_2$ is indeed a valid equilibrium solution (as verified by re-computing forces), but it is just one point in a larger null space of valid states.
Our optimization approach selects a specific solution from this manifold, namely a global minimizer for the frictional energy \eqref{eq:primal}.
This selection principle mimics nature's tendency to settle into specific energy-minimized configurations, even if there are other static equilibria.
Furthermore, it is well known in optimization theory and statistics that $L^1$ minimization promotes sparsity \cite{Tibshirani1996, Donoho2006}.
In the context of granular materials, this mathematical property has a direct physical interpretation: the formation of {\em force chains} \cite{Majmudar2005}.
By minimizing the sum of frictional work terms (an $L^1$-type objective), the system naturally selects a sparse network of load-bearing paths, leaving the majority of the domain in a state of low or zero stress.
This aligns with experimental observations of heterogeneous force networks in jammed packings \cite{Majmudar2005}.

Regarding torque balance, in the shear test (Figure \ref{fig:brick wall stress}), we noted that $\sigma_{12} \neq \sigma_{21}$.
This asymmetry is expected and physically consistent with our modeling choices.
In standard limit analysis, one enforces both force and torque balance on rigid blocks.
However, in our simplification, we explicitly exclude rotational degrees of freedom. 
Consequently, our ``equilibrium" satisfies weak force balance but not local moment balance.
The skew-symmetric component of the reconstructed stress tensor effectively accounts for the residual torques in the system.

Another counter-intuitive result from the compression test is the presence of cells in tension (positive principal stresses), despite the global compressive loading.
This is not a numerical artifact but a physical phenomenon observed in frictional granular assemblies.
Friction allows for ``interlocking" or ``bridging" effects where local force chains can shield certain regions or induce local tension perpendicular to the primary load path.
This local tension is a critical precursor to fracture initiation in geomaterials \cite{horii1985compression}.

Finally, a natural extension of this work is to restore full mechanical consistency by including rotational degrees of freedom $\theta_c$.
This would require:
\begin{enumerate}
    \item \textbf{Updated Constraints:} With rotations, the normal displacement varies linearly along an edge. To ensure non-interpenetration along the entire edge, it is necessary and sufficient to enforce the constraint at both endpoints of the edge.
This effectively doubles the number of inequality constraints.
    \item \textbf{Complex Objective Function:} Similarly, the tangential relative displacement varies linearly along the edge. The frictional work is the integral of the absolute value of this linear function. While more complex than the constant case, this convex functional can still be linearized (e.g., by splitting the integral at the zero-crossing of the displacement), allowing us to recast as a finite dimensional convex optimization problem.
\end{enumerate}
Implementing these features to recover a fully moment-balanced (symmetric stress) limit analysis remains a challenging avenue for future research.
Beyond moment balance, adopting a more realistic friction model such as Coulomb friction is also desirable. \label{disc:coulomb} While Tresca friction allows for a global energy minimization formulation (since the threshold $s_T$ is fixed), Coulomb friction introduces a coupling between the tangential threshold and the normal stress ($|\sigma_t| \le \mu |\sigma_n|$) resulting in a \textit{Quasi-Variational Inequality} \cite{baiocchi1984variational}. This dependency typically breaks the convexity of the associated energy functional, potentially requiring iterative fixed-point algorithms or incremental evolution schemes rather than a single static optimization step.

\section*{Acknowledgements}
The authors would like to thank the anonymous referee for their useful comments which helped improve this manuscript. SV was supported in part by NSF grants DMS-2108124 and DMS-2511503.

\section*{Data Availability Statement}
The code is available at \url{https://github.com/marazzaf/masonry_code.git}.

\bibliographystyle{plain}
\bibliography{references}

\end{document}